\theoremstyle{plain}
\newtheorem{theorem}{Theorem}[section]
\theoremstyle{definition}
\newtheorem{remark}{Remark}[section]
\newtheorem*{notation*}{Notation}
\numberwithin{equation}{section}
\begin{document}
\title{Towards a More General Type of  Univariate Constrained Interpolation With Fractal Splines}
\author[A.K.B. Chand]{A.K.B. Chand}
\address{Indian Institute of Technology Madras}
\author[P. Viswanathan]{P. Viswanathan}
\address{Australian National University}
\author[K.M. Reddy]{K.M. Reddy}
\address{Indian Institute of Technology Madras}
\maketitle
\begin{abstract}
Recently, in [Electronic  Transaction on Numerical  Analysis, 41 (2014), pp. 420-442] authors introduced a new class of rational cubic  fractal interpolation functions with linear denominators via fractal perturbation of traditional nonrecursive rational cubic splines and investigated their basic shape preserving properties. The main goal of the current article is to embark on  univariate constrained fractal interpolation that is more general than what was considered so far.  To this end, we propose some strategies for selecting the parameters of the rational fractal spline so that the interpolating curves lie strictly above or below a prescribed linear or  a quadratic spline function. Approximation property of the proposed rational cubic fractal spine is broached by using the Peano kernel theorem as an interlude. The paper also provides an illustration of background theory, veined by examples.
\end{abstract}
\textbf{Keywords.} Rational fractal interpolation function; Constrained interpolation; Peano Kernel; Convergence.\\

\textbf{AMS subject Classifications. } 28A80; 65D05; 65D07; 41A29; 41A30; 41A25.

\section{Introductory Remarks}\label{ICAFW2sec1}
Interpolation, which deals with the construction of a function in continuum from its availability in a finite set of points,  is a rather old problem dating back to ancient Babylon and Greece.
In view of its increasing relevance in this age of ever-increasing digitization, it is quite  natural that the subject of interpolation is
receiving more and more attention. Consequently, a multitude of different interpolation schemes are being developed and reported in various places in the literature. However,
in times where all efforts are directed towards construction of smooth functions, the fact that many experimental and natural signals are ``rough" having a dense set of nondifferentiable points or even nowhere differentiable  was easily ignored.

\par
In 1986,  Barnsley \cite{B1} introduced the idea of fractal interpolation function (FIF) aiming at data that require a continuous representation with ``high irregularity''. In words of Barnsley: .....(these functions)  appear ideally suited for approximation of naturally occurring functions
which display some kind of geometrical self-similarity under magnification, for instance,
profiles of mountain ranges, tops of clouds and horizons over
forests, temperatures in
flames as a function of time, electroencephalograph
pen traces and the minute by minute stock market index. In 1989, Barnsley and Harrington \cite{B2} provided a seminal result on differentiability of fractal functions, one which initiated a striking relationship between fractal functions and traditional nonrecursive interpolants, and lead to the creation of a subject which is now called fractal splines.  Since then,  many researchers have contributed to the theory of fractal functions by constructing
various types of FIFs, including  Hermite or spline FIFs (see, for instance, \cite{CV2,NS2}), hidden variable FIFs (see, e.g., \cite{B3,BD2,CK3}), multivariate FIFs (see, e.g., \cite{BD,D}). An operator theoretic formalism for fractal functions which enabled them to find extensive applications in various classical branches of mathematics was introduced and popularized by Navascu\'{e}s \cite{N1,N9}.

\par
In spite of about a quarter century of its first pronouncement, FIFs were extensively investigated only in terms of their algebraic and analytical properties such as fractal dimension, H\"{o}lder exponent, differentiability, integrability, stability, and perturbation errors. Thus, these studies completely ignore one of the important properties
of an interpolation scheme, namely, preserving shape inherent in the data. Constrained control of interpolating curve is a fundamental task and is an important subject that we face in applications including computer aided geometric design, data visualization, image analysis, cartography. However, fractal functions are not well explored in the field of constrained interpolation. Motivated by theoretical and practical needs, the authors have initiated
the study of shape preserving interpolation and approximation using fractal functions; see, for instance, \cite{VC3,VC4}. However,  these researches concern about preserving the three basic shape properties, namely, positivity, monotonicity, and convexity. On the other hand, there are practical situations wherein interpolating curves that lie completely  above or below a prefixed curve, for instance, a polygonal (piecewise linear function) or a quadratic spline are sought-after. The current article can be viewed as  a contribution in this vein. The impetus for the research direction reported here grew out of the works of Qi Duan et al. \cite{QD1,QD2}.

\par
The structure of this article is as follows. We  assemble some relevant definitions and facts concerning fractal interpolation in Section \ref{ICAFW2sec1}. In Section
\ref{ICAFW2sec2}, we briefly recall the rational cubic fractal spline studied in \cite{VC4}. Section \ref{ICAFW2sec3} is devoted to establish convergence of the rational cubic fractal spline. Strategies to select the free parameters involved in the rational cubic FIFs so that they render interpolating curves that lie below or above a prescribed spline (piecewise defined function) are enunciated  in Section \ref{ICAFW2sec4}. In section \ref{ICAFW2sec5}, we provide some numerical examples.

%----------------------------------------------------------------------------------------------------------------------------------------------

\section{Univariate Fractal Function: An Overview}\label{ICAFW2sec1}

This section targets to  a constellation of  a few rudiments of fractal interpolation theory. For a detailed exposition, the reader may consult the references \cite{B1,N1}. \\
The following notation and terminologies will be used throughout
the article. The set of real numbers will be denoted by $\mathbb{R}$, whilst set of natural numbers by $\mathbb{N}$. For a fixed $N \in \mathbb{N}$, we shall write $\mathbb{N}_N$ for the set of first $N$ natural numbers. Given real numbers $x_1$ and $x_N$ with $x_1<x_N$, we define $\mathcal{C}^k[x_1,x_N]$ to be the space of all real-valued functions on $[x_1,x_N]$ that are $k$-times differentiable with continuous $k$-th derivative.  \\
Let $N >2$ and $\{(x_i, y_i) \in \mathbb{R}^2: i \in \mathbb{N}_N\}$ denote the Cartesian coordinates of a finite set of
points in the Euclidean plane with strictly increasing abscissae. Set $I = [x_1,x_N]$ and $I_i = [x_i,x_{i+1}]$ for $i \in \mathbb{N}_{N-1}$. In fractal interpolation, a continuous function $g: I \to \mathbb{R}$ satisfying $g(x_i)=y_i$ for all $i \in \mathbb{N}_N$ and whose graph $G(g)$ is a fractal (self-referential set) in the sense that $G(g)$ is a union of transformed copies of itself is sought for. There are two approaches for constructing fractal interpolation functions. First method introduced by Barnsley characterizes the graph of FIF  as an attractor of specifically chosen iterated function system, hence  the graph of the fractal interpolant  can be
approximated by the ``chaos game" algorithm \cite{BAV}. In what follows, we supply the second approach, to wit, the construction of a fractal function as the unique fixed point of Read-Bajraktarevi\'{c} operator defined on a suitable function space, which was popularized by Massopust \cite{PM}.\\
 Suppose $L_i: I \rightarrow I_i$, $i \in \mathbb{N}_{N-1}$ be affine maps satisfying
\begin{equation*}\label{ICAFWeq1}
L_i(x_1)=x_i,\ L_i(x_N)=x_{i+1}.
\end{equation*}
For $i\in \mathbb{N}_{N-1}$, let $0<k_i<1$ and
 $F_i: I \times \mathbb{R} \to \mathbb{R}$ be functions that are continuous in first argument and fulfill
 \begin{equation*}\label{ICAFWeq2}
F_i(x_1,y_1)=y_i,\ \ F_i(x_N,y_N)=y_{i+1},~~ \vert
F_i(x,y)-F_i(x,y^*)\vert \leq k_i \vert
 y-y^*\vert.
\end{equation*}
Define
\begin{equation*}
\mathcal{C}^*(I)= \{h \in \mathcal{C}(I): h(x_1)=y_1, h(x_N)=y_N\},~~
\mathcal{C}^{**}(I)=~ \{h \in \mathcal{C}(I): h(x_i)=y_i, i \in \mathbb{N}_N\}.
\end{equation*}
It is valuable to note that both $\mathcal{C}^*(I)$ and $\mathcal{C}^{**}(I)$ are closed (metric) subspaces of the Banach space $\big(\mathcal{C}(I), \|.\|_\infty \big)$. Define
$T: \mathcal{C}^*(I) \to \mathcal{C}^{**}(I) \subset  \mathcal{C}^*(I)$ via
$$ Th (x) = F_i \big(L_i^{-1}(x), h \circ L_i^{-1}(x)\big), ~ x \in I_i, ~ i \in \mathbb{N}_{N-1}.$$
The mapping $T$ is a contraction with contraction factor $k:= \max\{k_i: i \in \mathbb{N}_{N-1}\}$, and the fixed point $g$ of $T$ interpolates the data $\{(x_i,y_i): i \in \mathbb{N}_N\}$. Consequently, $g$ enjoys the functional equation
$$g\big(L_i(x)\big)= F_i\big(x, g(x)\big),~ x\in I,~ i \in \mathbb{N}_{N-1}.$$
By the Banach fixed point theorem, it follows that $g$ can be evaluated by using
$$g= \lim _{k \to \infty} T^k (g_0),$$
where $T^k$ denotes the $k$-fold composition of $T$, and $g_0\in \mathcal{C}^*(I)$ is arbitrary. Further, by the collage theorem, for any $h \in \mathcal{C}^*(I)$
$$\|h-g\|_\infty \le \frac{1}{1-c}\|h-Th\|_\infty.$$
If $w_i: I\times \mathbb{R} \to I_i \times \mathbb{R}$ is defined by $w_i(x,y)=\big(L_i(x),F_i(x,y)\big)$
$\forall~i \in \mathbb{N}_{N-1} $, then
$$G(g)= \cup_{i \in \mathbb{N}_{N-1}} w_i \big (G(g) \big).$$
Next let us point out the well-known fact that the notion of fractal interpolation can be applied to associate  a family of fractal functions
with a prescribed function $f \in \mathcal{C}(I)$. This was  observed originally by Barnsley and explored in detail by Navascu\'{e}s. Consider the maps
\begin{equation*}\label{ICAFWeq4}
L_i(x)=a_ix+c_i,~  F_i(x,y)=\alpha_i y + f \circ L_i(x) -\alpha_i b(x),~
i\in \mathbb{N}_{N-1},
\end{equation*}
where $f\not\equiv b: I \to \mathbb{R}$ is a continuous function interpolating $f$ at the extremes of the interval and $\alpha_i$ are real parameters satisfying $|\alpha_i|<1$, termed scaling factors. The corresponding FIF denoted by
$f^{\alpha}_{\Delta, b}= f^\alpha$ is referred to as $\alpha$-fractal function for $f$ (fractal perturbation of $f$) with scale vector $\alpha$, base function $b$ and  partition $\Delta= \{x_1, x_2, \dots, x_N\}$ of $I$ with increasing abscissae. Note that here the interpolation points are $\{(x_i,f(x_i)):i \in \mathbb{N}_N\}$. The function $f^\alpha$ may be nondifferentiable and its fractal dimension (Minkowski dimension or Hausdorff dimension) depends on the parameter $\alpha \in (-1,1)^{N-1}$. Further, $f^\alpha$  satisfies the functional equation
\begin{equation}\label{ICAFWeq4}
f^\alpha(x)= f(x) + \alpha_i \big(f^\alpha-b\big)(L_i^{-1}(x)), ~x \in I_i,~i \in \mathbb{N}_{N-1}.
\end{equation}
With the notation $|\alpha|_\infty:= \max \{|\alpha_i|: i \in \mathbb{N}_{N-1}\}$, the following inequality points towards  the approximation of $f$ with its fractal perturbation $f^\alpha$.
\begin{equation}\label{ICAFWeq5}
\|f^\alpha-f\|_\infty \le \frac{|\alpha|_\infty}{1-|\alpha|_\infty}\|f-b\|_\infty.
\end{equation}
%-------------------------------------------------------------------------------------------------------------------------------------------
\section{Rational Cubic Spline FIF Revisited}\label{ICAFW2sec2}
Let $\{(x_i, y_i): i \in \mathbb{N}_N\}$ be a prescribed set of interpolation data with strictly increasing abscissae. Let $d_i$ be the derivative value  at the knot point $x_i$ which is given or estimated with some standard procedures. For $i \in \mathbb{N}_{N-1}$, denote by $h_i$, the local mesh spacing  $x_{i+1}-x_i$, and let  $r_i$ and $t_i$ be positive parameters referred to as shape parameters for obvious reasons. A $\mathcal{C}^1$-continuous  rational cubic spline with linear denominator was introduced in \cite{QD2} as follows.
\begin{equation*} \label{11a}
f\big(L_i(x)\big)= \frac{(1-\theta)^3 r_i y_i+\theta (1-\theta)^2
V_i+ \theta^2 (1-\theta)W_i +\theta^3 t_i y_{i+1}}{(1-\theta)r_i+
\theta t_i},\; i \in \mathbb{N}_{N-1},
\end{equation*}
where
\begin{equation*} \label{12}
V_i =(2r_i + t_i) y_i+ r_i h_i d_i,\;\; W_i =(r_i + 2t_i) y_{i+1}-
t_i h_i d_{i+1}, \;\;\theta=\frac{x-x_1}{x_N-x_1}.
\end{equation*}
In  reference \cite{VC4}, the authors  observe that for the construction of fractal perturbation $f^\alpha$ of the rational spline  $f$ involving shape parameters, it is more advantageous to work with a family of base functions $\{b_i: i \in \mathbb{N}_{N-1}\}$
instead of a single base function used in the traditional setting (see Eq. (\ref{ICAFWeq4})). Furthermore, for the fractal perturbation $f^\alpha$ to be $\mathcal{C}^1$- continuous, it suffices to choose the scaling parameters $\alpha_i$ such that $|\alpha_i|<a_i$ and base functions $\{b_i: i \in \mathbb{N}_{N-1}\}$ so that each $b_i$ agrees with $f$ at the extremes of the interpolation interval up to the first derivative. Among various possibilities, the following choice is motivated by the simplicity it offers for the final expression of the desired fractal analogue of $f$.
\begin{equation}\label{13}
b_i (x) = \frac{B_{1i} (1-\theta)^3 + B_{2i} \theta (1-\theta)^2 +
B_{3i} \theta^2 (1-\theta)+ B_{4i} \theta^3}{(1-\theta)r_i+ \theta
t_i},
\end{equation}
where the coefficients $B_{1i}$, $B_{2i}$, $B_{3i}$, and
$B_{4i}$ are prescribed as
\begin{equation*}\label{14}
\begin{split}
 B_{1i} &=  r_i y_1,\;\; B_{2i} =(2 r_i +t_i) y_1 +r_i d_1
 (x_N-x_1),\\
 B_{3i}& =( r_i + 2t_i) y_N-t_i
d_N(x_N-x_1),\;\;B_{4i} = t_i y_N.
\end{split}
\end{equation*}
Therefore, in view of (\ref{ICAFWeq4}), the desired $\mathcal{C}^1$-continuous rational cubic fractal spline is given by the expression
\begin{eqnarray}\label{15}
f^\alpha\big(L_i (x)\big)&=& \alpha_i f^\alpha(x)+ \frac{P_i(x)}{Q_i(x)},\; \text{where} \nonumber\\
P_i(x)=P_i^*(\theta) &=&(y_i-\alpha_i y_1) r_i (1-\theta)^3+(y_{i+1}-\alpha_i
y_N)t_i\theta^3+\big\{(2 r_i + t_i) y_i\nonumber \\&&+ r_i h_i d_i
-\alpha_i[(2 r_i + t_i)y_1+ r_i (x_N-x_1)
d_1]\big\}\theta(1-\theta)^2+\big\{(r_i+ 2t_i)y_{i+1} \nonumber \\&&-t_i
h_i d_{i+1}  -\alpha_i[(r_i +2t_i)y_N-t_i(x_N - x_1)d_N]\big\}\theta^2(1-\theta), \nonumber\\
Q_i(x)=Q_i^*(\theta)&=&(1-\theta) r_i+ \theta t_i,~~ i \in \mathbb{N}_{N-1},~ \theta=
\frac{x-x_1}{x_N-x_1}.
\end{eqnarray}
\begin{remark}
A rational cubic spline with linear denominator which is based only on the function values is introduced in \cite{QD1}, pointing out that
in some manufacturing processes the derivatives are difficult to obtain. However, as far as we know, it differs from (\ref{11a}) and its construction only in the following way. Consider the given set of data points $\Delta_1=\{(x_i,y_i): i \in \mathbb{N}_{N+1}\}$ and the subset $\Delta_2:=\{(x_i,y_i): i \in \mathbb{N}_N\}$ of interpolation points. Treating $d_i$ to be equal to the chord slope $\Delta_i= \frac{y_{i+1}-y_i}{h_i}$ for $i \in \mathbb{N}_{N}$, the rational cubic spline involving only the function values discussed in detail in \cite{QD1}  can be deduced at once from (\ref{11a}). Consequently, on similar lines, the following expression for the fractal rational cubic splines involving only  function values can be obtained.
\begin{eqnarray}\label{22}
\hat{f}^\alpha(L_i(x))&=&\alpha_i \hat{f}^\alpha(x)+ \frac{\hat{P_i}(x)}{\hat{Q_i}(x)},\nonumber\\
\hat{P_i}(x) &=& (y_i-\alpha_iy_1)r_i(1-\theta)^3+ (y_{i+1}-\alpha_{i}y_{N})t_{i}\theta^3+\nonumber\\
&&\{(r_i +t_i)y_i+r_i y_{i+1}-\alpha_i
[(2r_i+ t_i) y_1+
r_i(x_N-x_1)\Delta_1]\}\theta(1-\theta)^2+\nonumber\\
&& \{(2 t_i+r_i)y_{i+1}- h_i t_i
\Delta_{i+1}-\alpha_i[(2t_i
 + r_i)y_N-t_i (x_N-x_1)\Delta_N]\} \theta^2(1-\theta),\nonumber\\
\hat{Q_i}(x)&=&r_i (1-\theta) +  t_i \theta,~ i \in \mathbb{N}_{N-1},~ \theta=
\frac{x-x_1}{x_N-x_1}.
\end{eqnarray}
Hence, in principle, the analysis on the fractal spline structure (\ref{15}) set about to do in the subsequent sections can be easily modified and adapted to the rational cubic spline FIF given in Eq. (\ref{22}).
\end{remark}
%-------------------------------------------------------------------------------------------------------------------------------------
\section{Convergence Analysis}\label{ICAFW2sec3}
The rate at which an interpolant $g$ approaches $\Phi$, the unknown function generating the data $\{(x_i,y_i): i \in \mathbb{N}_N\}$, is perhaps the most important factor deciding the efficacy of an interpolation scheme. In \cite{VC4}, it has been established that the rational cubic fractal spline $f^\alpha$ converges to $\Phi \in \mathcal{C}^2(I)$ with respect to the $\mathcal{C}^2$-norm. Since order of continuity of $f^\alpha$ is $\mathcal{C}^1$, it is natural to ask whether $f^\alpha$ possesses uniform  convergence as the norm of the partition approaches zero, if the data generating function is assumed to have a reduced order of continuity, namely $\mathcal{C}^1$. In this section, we answer this in the affirmative. \\
Note that the implicit and recursive nature inherent in  the description of the  fractal spline $f^\alpha$ make it impossible or at least difficult to apply  the standard tools for  error analysis available in the classical interpolation theory for establishing its convergence. On the other hand, we can base our analysis on the  trustworthy triangle inequality
\begin{equation}\label{new1}
\|\Phi-f^\alpha\|_\infty \le \|\Phi-f\|_\infty + \|f-f^\alpha\|_\infty.
\end{equation}
Following (\ref{ICAFWeq5}), it can be deduced  at once that
\begin{equation}\label{new2}
\|f-f^\alpha\|_\infty \le \frac{|\alpha|_\infty}{1-\alpha|_\infty} \big(\|f\|_\infty + \max_{i \in \mathbb{N}_{N-1}} \|b_i\|_\infty\big).
\end{equation}
It can be read from \cite{VC4} that
\begin{eqnarray*}
\|f\|_\infty &\le& |y|_\infty + \frac{h}{4} |d|_\infty,\\
\|b_i\|_\infty &\le& \max \{|y_1|, |y_N|\}+ \frac{|I|}{4}\max \{|d_1|,|d_N|\},
\end{eqnarray*}
where $|y|_\infty:= \{|y_i|: i \in \mathbb{N}_{N}\}$, $|d|_\infty:= \{|d_i|: i \in \mathbb{N}_{N}\}$, $h:=\{h_i: i \in \mathbb{N}_{N-1}\}$, and $|I|:=x_N-x_1$. Observing that $|\alpha_i|<a_i=\frac{h_i}{|I|}$ for all $i \in \mathbb{N}_{N-1}$,  we obtain
$$ \frac{|\alpha|_\infty}{1-\alpha|_\infty} \le \frac{h}{|I|-h}.$$
Hence from (\ref{new2}),  the perturbation error obeys $$\|f-f^\alpha\|_\infty = O(h), ~\text{as}~ h \to 0.$$
As an interlude to our analysis, we have the following theorem which deals with the first summand of the inequality (\ref{new1}), namely, the interpolation error of the traditional rational cubic spline $f$ (cf. Eq. (\ref{11a})) with respect to the uniform norm.
\begin{theorem}\label{errorthm1}
Let $\{(x_i,y_i): i \in \mathbb{N}_N\}$ be a prescribed set of interpolation data generated by a function $\Phi \in \mathcal{C}^1(I)$ and $f$ be the corresponding traditional nonrecursive rational cubic spline with linear denominator. Assume that the derivatives at the knots are given or estimated by some linear approximation methods (for instance, arithmetic mean method). Then the local error of the interpolation is given by
$$|\Phi(x)-f(x)| \le ~h_i ~c_i\|\Phi'\|_\infty,~ \text{for}~ x \in I_i,$$ where for the local variable $\varphi:=\frac{x-x_i}{h_i}$, the constant $c_i$ is given by
$$c_i =\max_{0 \le \phi \le 1} \frac{r_i\varphi(1-\varphi)^2(1+2\varphi)+t_i \varphi^2 (1-\varphi)(3-2\varphi)}{r_i(1-\varphi)+t_i \varphi}. $$
\end{theorem}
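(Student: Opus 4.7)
The plan is to use the constant- and linear-reproduction properties of the rational cubic together with the fundamental theorem of calculus. First I would rewrite $f$ on $I_i$ in the Hermite-like form
\[
f(x) = \alpha_0(\varphi)\,y_i + \alpha_1(\varphi)\,y_{i+1} + \beta_0(\varphi)\,d_i + \beta_1(\varphi)\,d_{i+1},
\]
with $\varphi=(x-x_i)/h_i$ and coefficients read off from (\ref{15}): $\alpha_0=A/Q$, $\alpha_1=B/Q$, $\beta_0 = r_i h_i\varphi(1-\varphi)^2/Q$, $\beta_1 = -t_i h_i\varphi^2(1-\varphi)/Q$, where $A(\varphi)=(1-\varphi)^2[r_i+(r_i+t_i)\varphi]$, $B(\varphi)=\varphi^2[(r_i+2t_i)-(r_i+t_i)\varphi]$, and $Q(\varphi)=r_i(1-\varphi)+t_i\varphi$. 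The elementary identity $A+B=Q$ gives $\alpha_0+\alpha_1=1$ (constant reproduction); evaluating $f$ on the linear polynomial $\Phi(s)=s$ yields the further identity $\alpha_1 h_i + \beta_0+\beta_1 = h_i\varphi$ (linear reproduction).

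Substituting $y_j=\Phi(x_j)$ and using $\Phi(x)-\Phi(x_j)=\int_{x_j}^x\Phi'(s)\,ds$, this representation rearranges to
\[
\Phi(x)-f(x) = \alpha_0(\varphi)\int_{x_i}^{x}\Phi'(s)\,ds \;-\; \alpha_1(\varphi)\int_{x}^{x_{i+1}}\Phi'(s)\,ds \;-\; \beta_0(\varphi)\,d_i \;-\; \beta_1(\varphi)\,d_{i+1}.
\]
Bounding each integral by $h_i\varphi\|\Phi'\|_\infty$ and $h_i(1-\varphi)\|\Phi'\|_\infty$ respectively, and using that a linear derivative estimate (such as the arithmetic mean method) forces $|d_j|\le\|\Phi'\|_\infty$, a careful grouping of the four terms produces a quotient whose numerator is exactly $A(\varphi)\varphi+B(\varphi)(1-\varphi)$. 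The core algebraic step is the identity
\[
A(\varphi)\,\varphi + B(\varphi)(1-\varphi) = r_i\varphi(1-\varphi)^2(1+2\varphi) + t_i\varphi^2(1-\varphi)(3-2\varphi),
\]
verified by direct expansion from the factored forms of $A$ and $B$; this is precisely what makes the Hermite-type factors $(1+2\varphi)$ and $(3-2\varphi)$ surface in the numerator of $c_i$.

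The hard part is getting this exact form rather than a looser one. A naive triangle inequality on the four terms yields an extra summand $h_i\varphi(1-\varphi)\|\Phi'\|_\infty$, because $\beta_0+|\beta_1|=h_i\varphi(1-\varphi)$. To close the gap the proof must exploit the opposite signs of $\beta_0$ and $\beta_1$ together with the structural relation between $d_i,d_{i+1}$ and $\Phi'$ (both bounded by $\|\Phi'\|_\infty$ and arising from the same $\Phi$), so that the derivative contributions are absorbed into the integral estimates via the linear-reproduction identity $\alpha_1 h_i+\beta_0+\beta_1=h_i\varphi$. Once the pointwise bound $|\Phi(x)-f(x)|\le h_i c_i(\varphi)\|\Phi'\|_\infty$ is in hand, taking the maximum over $\varphi\in[0,1]$ delivers the asserted estimate.
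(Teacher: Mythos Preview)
Your approach is essentially the paper's own: the Peano kernel for exactness of degree~$0$ is nothing more than the fundamental theorem of calculus, and the kernel pieces the paper computes,
\[
r(\tau,x)=\frac{r_i(1-\varphi)(1-\varphi^2)+t_i\varphi(1-\varphi)^2}{Q},\qquad
s(\tau,x)=-\frac{r_i\varphi^2(1-\varphi)+t_i\varphi^2(2-\varphi)}{Q},
\]
are exactly your $\alpha_0$ and $-\alpha_1$ (just expand $A$ and $B$).  Hence the integral $\int_{x_i}^{x}r\,d\tau-\int_{x}^{x_{i+1}}s\,d\tau$ in the paper equals your $h_i[\alpha_0\varphi+\alpha_1(1-\varphi)]=h_i[A\varphi+B(1-\varphi)]/Q$, and your algebraic identity for $A\varphi+B(1-\varphi)$ is precisely what produces the numerator of $c_i$.

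Where your write-up differs is that you \emph{see} the derivative contributions $-\beta_0 d_i-\beta_1 d_{i+1}$ and try to dispose of them, whereas the paper's kernel computation simply does not display them (its $r,s$ carry no $\tau$-dependence and no $d$-terms).  Your honest accounting shows that a straight triangle inequality leaves the extra summand $h_i\varphi(1-\varphi)\|\Phi'\|_\infty$, since $|\beta_0|+|\beta_1|=h_i\varphi(1-\varphi)$.  The mechanism you propose to remove it---``opposite signs of $\beta_0,\beta_1$ together with the linear-reproduction identity $\alpha_1 h_i+\beta_0+\beta_1=h_i\varphi$''---does not close the gap as stated: linear reproduction controls $\beta_0+\beta_1$, but the error carries $\beta_0 d_i+\beta_1 d_{i+1}$ with two \emph{independent} values $d_i,d_{i+1}$ of either sign, so no cancellation between $\beta_0$ and $\beta_1$ can be forced without an additional hypothesis linking $d_i$ and $d_{i+1}$ to the same integral of $\Phi'$.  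In short, your route and the paper's route coincide; the residual $h_i\varphi(1-\varphi)$ term you flag is a genuine issue that the paper's Peano-kernel calculation also leaves unaddressed rather than one introduced by your method.
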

\begin{proof}
Consider the error function $E(\Phi;x)=\Phi(x)-f(x)$ as a linear functional which operates on $\Phi$. Direct calculations confirm that the linear functional annihilates polynomials of degree strictly less than one.  By the Peano kernel theorem \cite{Pow}:
\begin{equation}\label{e1}
 L[\Phi]=E(\Phi;x)=
\Phi(x)-f(x)=\int_{x_i}^{x_{i+1}}\Phi'(\tau)L_{x}[(x-\tau)^0_{+}
]~d\tau,
\end{equation}
where the kernel function is given by
\begin{displaymath}
   L_{x}[(x-\tau)^0_+] = \left\{
     \begin{array}{lr}
       r(\tau,x) ~~ \text{if}~~ x_{i}<\tau<x,\\
       s(\tau,x) ~~ \text{if}~~ x<\tau<x_{i+1}.
     \end{array}
   \right.
\end{displaymath}
Here the notation $L_{x}$ is used to emphasize that the functional
$L$ is applied to the  truncated power
function
\begin{displaymath}
   (x-\tau)^n_+ := \left\{
     \begin{array}{lr}
       (x-\tau)^n ~~ \text{if}~~ \tau<x,\\
       ~~~0~~~~~~ ~~~~~ \text{if}~~ \tau >x,
     \end{array}
   \right.
\end{displaymath}
considered as a
function of $x$. Denoting $\varphi:= \frac{x-x_i}{h_i}$, a rigorous calculation yields
\begin{eqnarray*}
r (\tau,x) &=&
\frac{r_i (1-\varphi) (1-\varphi^2)+t_i\varphi (1-\varphi)^2}{(1-\varphi)r_i+\varphi t_i},\label{18b}\\
s (\tau,x) &=& -\frac{r_i \varphi^2 (1-\varphi)+t_i\varphi^2 (2-\varphi)}{(1-\varphi)r_i+\varphi t_i}.
\end{eqnarray*}
Using the above expression for the kernel function in (\ref{e1}), we obtain the following
chain of relations:
\begin{eqnarray*}
|\Phi(x)-C(x)|  &= & \Big |\int_{x_i}^{x_{i+1}}
\Phi'(\tau)L_{x}[(x-\tau)^0_{+}]~d\tau \Big|,\\
& \le & \|\Phi'\| \int_{x_i}^{x_{i+1}} \big|L_x [(x-\tau)^0_{+}
]\big|~d\tau, \\
& \le &\|\Phi'\| \Big\{\int_{x_i}^{x} r(\tau, x)~d\tau
-\int_{x}^{x_{i+1}} s(\tau,x)~d \tau \Big \},\\
&=&\|\Phi'\|\Big\{ \frac{r_i (1-\varphi)(1-\varphi^2)+t_i\varphi(1-\varphi)^2}{(1-\varphi)r_i+\varphi t_i}(x-x_i)\\&&+\frac{r_i \varphi^2(1-\varphi)+t_i\varphi^2(2-\varphi)}{(1-\varphi)r_i+\varphi t_i}(x_{i+1}-x)\Big\},\\
&=& \|\Phi'\|h_i c_i,
\end{eqnarray*}
where $\|.\|$ denotes the uniform norm on the subinterval $I_i=[x_i,x_{i+1}]$. This offers the promised result.
\end{proof}
Moving now to the crux of this section, we have the following theorem, whose proof  is immediate from the foregoing discussion and the theorem.
\begin{theorem}
Let $\{(x_i,y_i): i \in \mathbb{N}_N\}$ be a prescribed set of interpolation data generated by a function $\Phi \in \mathcal{C}^1(I)$. Assume further
that the derivatives at the knots are obtained by some linear approximation methods. Let  $f^\alpha$
be the corresponding rational cubic spline FIF. Then
\begin{eqnarray*}
\|\Phi-f^\alpha\|_\infty &\le& \frac{|\alpha|_\infty}{1-|\alpha|_\infty}\Big[|y|_\infty+ \max \{|y_1|, |y_N|\}+  \frac{1}{4}\big(h |d|_\infty+|I| \max\{|d_1|,|d_N|\}\big)\Big]\\ &&+ c h\|\Phi'\|_\infty,
\end{eqnarray*}
where $c:=\max\{c_i: i \in \mathbb{N}_{N-1}\}.$
\end{theorem}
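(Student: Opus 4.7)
The plan is to combine two bounds via the triangle inequality (\ref{new1}) applied to $\Phi$, $f$, and $f^\alpha$: the classical interpolation error $\|\Phi - f\|_\infty$ for the nonrecursive rational cubic spline with linear denominator, and the fractal perturbation error $\|f - f^\alpha\|_\infty$ arising from passage to the FIF. Each of these two summands has already been explicitly controlled in the discussion preceding the statement, so assembling them is essentially a bookkeeping exercise.

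First I would handle the perturbation term. Starting from inequality (\ref{new2}), I would substitute the sup-norm bounds on $\|f\|_\infty$ and $\max_{i}\|b_i\|_\infty$ cited from \cite{VC4}, and use the observation $|\alpha_i| < a_i = h_i/|I|$ to leave the prefactor $|\alpha|_\infty/(1-|\alpha|_\infty)$ in place. This produces exactly the first bracketed summand in the claimed inequality, namely the expression involving $|y|_\infty$, $\max\{|y_1|,|y_N|\}$, and the mean of the derivative magnitudes.

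Next I would address the interpolation term. For each subinterval $I_i$ with $i \in \mathbb{N}_{N-1}$, Theorem \ref{errorthm1} supplies $|\Phi(x) - f(x)| \le h_i c_i \|\Phi'\|_\infty$ for all $x \in I_i$, where the local sup-norm of $\Phi'$ is trivially majorised by its global counterpart on $I$. Taking the supremum over $x \in I$ and setting $h := \max_i h_i$ and $c := \max_i c_i$ yields $\|\Phi - f\|_\infty \le c\, h\, \|\Phi'\|_\infty$, which is precisely the second summand in the target estimate.

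Adding these two bounds and invoking (\ref{new1}) gives the stated inequality. I do not anticipate a genuine obstacle, since every ingredient has been laid out in the preceding paragraphs and in Theorem \ref{errorthm1}. The only items requiring mild care are verifying that $c = \max_i c_i$ is finite — which is clear because the positivity of $r_i, t_i$ makes each $c_i$ the maximum of a continuous function on the compact interval $[0,1]$ — and making the routine checks that local sup-norms can be replaced by their global versions in each estimate without damaging the desired constants.
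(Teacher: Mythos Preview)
Your proposal is correct and mirrors the paper's own argument exactly: the paper states that the proof ``is immediate from the foregoing discussion and the theorem,'' meaning precisely the combination of the triangle inequality (\ref{new1}), the perturbation bound (\ref{new2}) together with the cited estimates on $\|f\|_\infty$ and $\|b_i\|_\infty$, and the local error bound of Theorem~\ref{errorthm1} maximised over subintervals. The only superfluous remark is your invocation of $|\alpha_i|<a_i$, which the paper uses not for this theorem but for the subsequent $O(h)$ observation; here the prefactor $|\alpha|_\infty/(1-|\alpha|_\infty)$ is simply carried over from (\ref{new2}) as is.
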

The following remark highlights the important fact to which the preceding theorem sheds light.
\begin{remark} If $\Phi \in\mathcal{C}^1(I)$ is the data generating function and $f^\alpha$ is the rational cubic fractal spline corresponding to the data set, then
$$\|\Phi-f^\alpha\|_\infty =O(h), ~\text{as}~ h \to 0.$$ Ergo, $f^\alpha$ converges uniformly to the original function, as the norm
 of the partition tends to zero.
\end{remark}
\begin{remark}
 Let us point out, at least for the sake of good bookkeeping, that the application of triangle inequality does not imply that the error in approximating a continuously differentiable function $\Phi$ with the fractal spline $f^\alpha$  is always greater than or equal to the error in approximation by its classical counterpart $f$.
We employed the triangle inequality just to demonstrate that the fractal spline $f^\alpha$ has the same order of convergence as that of the classical rational spline $f$. Finding appropriate $\alpha$ for which $f^\alpha$ is close to $\Phi$ is an entirely different problem, which can be shown to be a constrained convex optimization problem with the aid of collage theorem \cite{VC3}. There is no reason to believe that the solution would always be $\alpha=0$, which corresponds to the classical interpolant $f$.
\end{remark}
%-----------------------------------------------------------------------------------------------------------------------------------------------------------
\section{Constrained Interpolation with Rational Cubic Spline FIF} \label{ICAFW2sec4}
This section features a systematic discussion of selection of parameters involved in the rational cubic spline FIF,  focusing on its applicability in a more general constrained interpolation problem. To be precise, given a data set $\{(x_i,y_i): i\in \mathbb{N}_N\}$ and a function $p$ (piecewise linear or  piecewise quadratic with joints at the knots  $x_i$) satisfying $ y_i \ge p(x_i)$, the problem is to construct a rational cubic spline FIF $f^\alpha$ such that $f^\alpha(x) \ge p(x)$ for all $x \in I$. The reader is bound to have noticed that the values of $f^\alpha$ in the subinterval $[x_i,x_{i+1}]$ depends on its values on other subintervals as well. Hence, obtaining conditions for which $f^\alpha$ lies above a piecewise defined function is relatively harder than that in the corresponding classical counterpart. However, this can be remedied by connecting $f^\alpha$ with its classical counterpart $f^0=f$, and performing  the analysis on $f$ rather than on $f^\alpha$ itself. Let us commence by noting that \begin{eqnarray*}
 f^\alpha(x)-p(x) &=&f^\alpha(x)-f(x)+ f(x)-p(x)\\
 &\geq& \frac{|\alpha|_\infty}{|\alpha|_\infty-1}M + f(x)-p(x),
\end{eqnarray*}
where $$M=|y|_\infty+ \max \{|y_1|, |y_N|\}+  \frac{1}{4}\big(h |d|_\infty+|I| \max\{|d_1|,|d_N|\}\big).$$
For brevity, we denote
$$K= \frac{|\alpha|_\infty}{|\alpha|_\infty-1}M.$$
It follows that for $f^\alpha(x)-p(x)\ge 0$, it suffices to make
$$f(x) \ge p(x)-K.$$
Recall that on $[x_i, x_{i+1}]$, $f(x)=\frac{R_i(x)}{S_i(x)}$, where with local variable $\varphi:=\frac{x-x_i}{h_i}$,
\begin{eqnarray*}
R_i(x)& =& (1-\varphi)^3 r_i y_i+ \varphi (1-\varphi)^2 [(2r_i+t_i)y_i+r_ih_id_i]+ \varphi^2 (1-\varphi) [(r_i+2t_i) y_{i+1}\\&&-t_ih_id_{i+1}]+\varphi^3 t_i y_{i+1},\\
S_i(x)&=& (1-\varphi) r_i + \varphi t_i.
\end{eqnarray*}
Let $p$ be a piecewise linear function with joints at $x_i$, $i \in \mathbb{N}_N$. If $p_i=p(x_i)$ and $p_{i+1}=p(x_{i+1})$, then we have
$$p(x)= p_i (1-\varphi) + p_{i+1} \varphi, ~ x \in I_i=[x_i,x_{i+1}].$$
Therefore $f(x) \ge p(x)-K$ for all $x \in I$ is satisfied if
\begin{equation}\label{eqnconstr1}
R_i(x)-[p_i (1-\varphi) + p_{i+1} \varphi-K] S_i(x) \ge 0, ~ \text{for}~ x \in I_i, ~i \in \mathbb{N}_{N-1}.
\end{equation}
By using the technique of degree elevation, we assert  that
\begin{eqnarray*}
[p_i (1-\varphi) + p_{i+1} \varphi] S_i(x)&=& r_i p_i (1-\varphi)^3+ [r_i(p_i+p_{i+1})+p_it_i] \varphi(1-\varphi)^2\\&&+[r_i p_{i+1}+t_i(p_i+p_{i+1})] \varphi^2(1-\phi)+
\varphi^3 t_i p_{i+1},\\
S_i(x)=(1-\varphi)r_i + \varphi t_i &=& r_i(1-\varphi)^3 + (2r_i+t_i)(1-\varphi)^2 \varphi+ (r_i+2t_i)\varphi^2 (1-\varphi)\\&&+ \varphi^3 t_i.
\end{eqnarray*}
In view of the aforementioned pair of equations, the condition (\ref{eqnconstr1}) can be recast as a problem of positivity of a cubic polynomial, namely,
 \begin{eqnarray*}
 &&r_i(y_i-p_i+K) (1-\varphi)^3 + \big[r_i(2y_i+h_id_i-p_i-p_{i+1}+2K)+t_i(y_i-p_i+K)\big]\varphi(1-\varphi)^2\\&&+ \big[r_i(y_{i+1}-p_{i+1}+K)+t_i(2y_{i+1}-h_id_{i+1}-p_i-p_{i+1}+2K)\big] \varphi^2 (1-\varphi)\\&&+ t_i (y_{i+1}-p_{i+1}+K) \varphi^3 \ge 0, ~\text{for all}~i \in \mathbb{N}_{N-1}.
 \end{eqnarray*}
 Conditions for positivity (nonnegativity) of a cubic polynomial is well-studied in the literature (see, for instance, \cite{SH2}). However, to keep our analysis simple enough, we shall impose conditions on parameters so that each coefficient of the cubic polynomial appearing in the above inequality is nonnegative.
 Noting that $y_i \ge p_i$ for all $i \in \mathbb{N}_N$ and $M\ge0$, we obtain
 \begin{eqnarray*}
 y_j-p_j+ K \ge 0 &\Longleftrightarrow& |\alpha|_\infty \le \frac {y_j-p_j}{y_j-p_j+M},~ \text{for}~j=i,i+1.
 \end{eqnarray*}
 The  discussion we had until now is epitomized in the following theorem.
 \begin{theorem}\label{abovelinecon}
 Suppose that a data set $\{(x_i,y_i): i \in \mathbb{N}_N\}$, where $y_i \ge p_i=p(x_i)$ and $p$ is a piecewise linear function  with joints at knots $x_i$ is prescribed. Then a sufficient condition for the rational cubic spline FIF $f^\alpha$ to lie above $p$ is that the parameters satisfy the following inequalities:
 \begin{enumerate}[(i)]
 \item $|\alpha_i|<a_i$ for $i \in \mathbb{N}_{N-1}$, and $|\alpha|_\infty \le \min \big\{\dfrac {y_i-p_i}{y_i-p_i+M}: i \in \mathbb{N}_N\big\}$,\\
 \item $r_i(2y_i-p_{i+1}-p_i+h_id_i+2K) +t_i (y_i-p_i+K)\ge 0,~ i \in \mathbb{N}_{N-1}$,\\
 \item $r_i(y_{i+1}-p_{i+1}+K)+t_i(2 y_{i+1}-p_{i+1}-p_i-h_id_{i+1}+2K) \ge 0, ~ i \in \mathbb{N}_{N-1}$,
 \end{enumerate}
 where
 \begin{eqnarray*}
 M&=&|y|_\infty+ \max \{|y_1|, |y_N|\}+  \frac{1}{4}\big(h |d|_\infty+|I| \max\{|d_1|,|d_N|\}\big),\\
 K&=& \frac{|\alpha|_\infty}{|\alpha|_\infty-1}M.
 \end{eqnarray*}
\end{theorem}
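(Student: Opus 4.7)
The plan is to leverage the reduction already assembled in the paragraphs preceding the statement. Specifically, I would first invoke the lower bound
\[
f^\alpha(x)-p(x) \;\ge\; K+\bigl(f(x)-p(x)\bigr)
\]
which comes directly from the perturbation estimate $\|f-f^\alpha\|_\infty\le \frac{|\alpha|_\infty}{1-|\alpha|_\infty}M$ established in Section~\ref{ICAFW2sec3}. Under this reduction, proving $f^\alpha\ge p$ on $I$ is replaced by proving $f(x)\ge p(x)-K$ on every subinterval $I_i$, a purely classical (non-recursive) inequality that I can attack by using the explicit rational form of $f$ and clearing the positive denominator $S_i$.

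On $I_i$, the desired inequality is equivalent to
\[
R_i(x)-\bigl[p_i(1-\varphi)+p_{i+1}\varphi-K\bigr]S_i(x)\;\ge\;0.
\]
I would then degree-elevate both $(1-\varphi)p_i+\varphi p_{i+1}$ and $S_i(x)=(1-\varphi)r_i+\varphi t_i$ to the cubic Bernstein basis $\{(1-\varphi)^3,\varphi(1-\varphi)^2,\varphi^2(1-\varphi),\varphi^3\}$, exactly as displayed in the discussion. Since each basis element is nonnegative on $[0,1]$, the whole expression is nonnegative provided its four Bernstein coefficients are, and these coefficients coincide with the four bracketed quantities already written out in the paragraph preceding the theorem.

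The two ``pure-endpoint'' coefficients are $r_i(y_i-p_i+K)$ and $t_i(y_{i+1}-p_{i+1}+K)$. Since $r_i,t_i>0$, their nonnegativity is equivalent to $y_j-p_j+K\ge 0$ for $j\in\{i,i+1\}$. Noting that $K\le 0$ (because $|\alpha|_\infty<1$), this inequality rearranges to $|\alpha|_\infty(y_j-p_j+M)\le y_j-p_j$, which, given the standing assumption $y_j\ge p_j$, is precisely hypothesis (i). The two ``mixed'' coefficients, on the other hand, are literally the left-hand sides of (ii) and (iii). Thus under (i)--(iii) the cubic is pointwise nonnegative on $[0,1]$, giving $f(x)\ge p(x)-K$, and the opening reduction then yields $f^\alpha(x)\ge p(x)$ for all $x\in I$.

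I do not anticipate a real obstacle. The only non-mechanical step is verifying that the two degree-elevations reproduce exactly the Bernstein coefficients displayed, which is a routine expansion. The subtlest bookkeeping point is tracking the sign of $K$: because $|\alpha|_\infty<1$ the constant $K$ is nonpositive, so the inequality $y_j-p_j+K\ge 0$ is a genuine restriction that must be solved for $|\alpha|_\infty$, yielding the bound in (i); the condition $|\alpha_i|<a_i$ is kept separately since it is what guarantees that $f^\alpha$ exists as the $\mathcal{C}^1$ fixed point of the Read--Bajraktarevi\'c operator in the first place.
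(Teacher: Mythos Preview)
Your proposal is correct and follows essentially the same route as the paper: the paper's argument is precisely the discussion preceding the theorem, using the perturbation bound to reduce to $f(x)\ge p(x)-K$, clearing the positive denominator $S_i$, degree-elevating to the cubic Bernstein basis, and reading off (i)--(iii) from nonnegativity of the four Bernstein coefficients. Your additional remarks on the sign of $K$ and on the separate role of $|\alpha_i|<a_i$ for $\mathcal{C}^1$-existence are also in line with the paper's treatment.
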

A couple of  remarks that supplement the stated theorem are in order.
\begin{remark}
 On similar lines, conditions for $f^\alpha$ to lie below a piecewise linear function $p*$ with joints at $x_i$ satisfying $p^*(x_i) \le y_i$
 can be derived. By coupling these conditions with those prescribed in Theorem \ref{abovelinecon}, we obtain strategies for selecting the scaling factors and shape parameters so that  $p^*(x) \le f^\alpha(x) \le p(x)$ for all $x \in I$. To keep the article at a reasonable length, we avoid the computational details here.
 \end{remark}
 \begin{remark} \label{exsrem}
 Having selected the scaling parameters according to the prescription in  Theorem \ref{abovelinecon}, the existence of  rational cubic fractal spline $f^\alpha$ that lie above a piecewise linear function $p$  points naturally to the existence of positive shape parameters $r_i$ and $t_i$ satisfying the  inequalities therein. We shall hint on the existence in the following, see also \cite{QD2}. Let us rewrite the inequalities involving $r_i$ and $t_i$ (see items (ii)-(iii) in Theorem \ref{abovelinecon}) as follows.
 \begin{eqnarray}\label{exstineq}
 A_i+ \lambda_i B_i \ge 0,~~~~
C_i + \lambda_i D_i \ge 0,
 \end{eqnarray}
 where $\lambda_i=\frac{t_i}{r_i}$, $A_i=2y_i-p_{i+1}-p_i+h_id_i+2K$, $B_i=y_i-p_i+K$, $C_i=y_{i+1}-p_{i+1}+K$, and $D_i=2 y_{i+1}-p_{i+1}-p_i-h_id_{i+1}+2K$. If $p_i<y_i$ for all $i \in \mathbb{N}_N$ and the scaling factors are chosen such that $|\alpha|_\infty < \min \big\{\frac{y_i-p_i}{y_i-p_i+M}: i \in \mathbb{N}_N\}$ (that is, $B_i>0$, $C_i>0$), then a positive $\lambda_i$ (that is,  a set of positive $r_i$ and $t_i$) satisfying the inequalities in (\ref{exstineq}) exists except when $A_i>0$, $D_i<0$, and $A_iD_i>C_iB_i$.
 \end{remark}
 \begin{remark}
  If we replace $d_i$ with the slopes $\Delta_i$, then the coefficient of $r_i$ in item (ii) of Theorem \ref{abovelinecon} reduces to $y_i-p_i+y_{i+1}-p_{i+1}+2K$. Hence, in view of item (i), the  condition in item (ii) is trivially satisfied. Consequently, we obtain the following conditions for the rational cubic spline FIF $\hat{f}^\alpha$ (cf. (\ref{22})) to lie above a piecewise linear function $p$.
 \begin{eqnarray*}
  &&|\alpha_i|<a_i, ~i \in \mathbb{N}_{N-1},~~ |\alpha|_\infty \le \min \big\{\dfrac {y_i-p_i}{y_i-p_i+M}: i \in \mathbb{N}_N\big\},\\
  &&r_i(y_{i+1}-p_{i+1}+K)+t_i(2 y_{i+1}-p_{i+1}-p_i-h_i\Delta_{i+1}+2K) \ge 0, ~ i \in \mathbb{N}_{N-1}.
  \end{eqnarray*}
 \end{remark}
\noindent  Next let us consider $p$ to be a piecewise defined quadratic polynomial with joints at $x_i$, $i \in \mathbb{N}_N$. If $p_i=p(x_i)$,  $p_{i+1}=p(x_{i+1})$, and $p'(x_i)=p_i'$, then we have
$$p(x)= p_i (1-\varphi)^2 + (2p_i+p_i'h_i) \varphi(1-\varphi)+  p_{i+1} \varphi^2, ~ x \in I_i=[x_i,x_{i+1}].$$
Analysis similar to the case of piecewise linear $p$ establishes the following theorem.
\begin{theorem}\label{fifabvqf}
Suppose that a data set $\{(x_i,y_i): i \in \mathbb{N}_N\}$, where $y_i \ge p_i=p(x_i)$ and $p$ is  a piecewise quadratic  function  with knots at $x_i$ is given. Then a sufficient condition for the rational cubic spline FIF $f^\alpha$ to lie above $p$ is that the parameters satisfy the following inequalities:
 \begin{enumerate}[(i)]
 \item $|\alpha_i|<a_i$ for $i \in \mathbb{N}_{N-1}$, and $|\alpha|_\infty \le \min \big\{\dfrac {y_i-p_i}{y_i-p_i+M}: i \in \mathbb{N}_N\big\}$,\\
 \item $r_i(2y_i-2p_i+h_id_i-h_ip_i'+2K) +t_i (y_i-p_i+K)\ge 0,~ i \in \mathbb{N}_{N-1}$,\\
 \item $r_i(y_{i+1}-p_{i+1}+K)+t_i(2 y_{i+1}-2p_i-h_id_{i+1}-h_ip_i'+2K) \ge 0, ~ i \in \mathbb{N}_{N-1}$,
 \end{enumerate}
 where
 \begin{eqnarray*}
 M&=&|y|_\infty+ \max \{|y_1|, |y_N|\}+  \frac{1}{4}\big(h |d|_\infty+|I| \max\{|d_1|,|d_N|\}\big),\\
 K&=& \frac{|\alpha|_\infty}{|\alpha|_\infty-1}M.
 \end{eqnarray*}
\end{theorem}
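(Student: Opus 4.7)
My plan is to follow the template of the proof of Theorem \ref{abovelinecon}, modifying only the step where the obstacle function $p$ is expressed in Bernstein form. First, using the perturbation estimate derived in Section \ref{ICAFW2sec3} (cf. (\ref{new2}) together with the bounds on $\|f\|_\infty$ and $\|b_i\|_\infty$ recorded there), I obtain
\[
f^\alpha(x) - p(x) \ge K + f(x) - p(x), \qquad x \in I,
\]
with $K = \frac{|\alpha|_\infty}{|\alpha|_\infty - 1} M$, exactly as in the linear case. Since $S_i(x) = (1-\varphi)r_i + \varphi t_i > 0$ on $I_i$, it therefore suffices to establish the polynomial inequality $R_i(x) - [p(x) - K]\,S_i(x) \ge 0$ on each $I_i$.

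Second, on $I_i$ I would write $p$ in its quadratic Bernstein form in the local variable $\varphi = (x-x_i)/h_i$, using the nodal data $p_i = p(x_i)$, $p_{i+1} = p(x_{i+1})$, and $p_i' = p'(x_i)$:
\[
p(x) = p_i(1-\varphi)^2 + (2p_i + p_i' h_i)\varphi(1-\varphi) + p_{i+1} \varphi^2.
\]
Multiplying by the linear $S_i(x)$ produces a cubic expressed directly in the basis $\{(1-\varphi)^3,\, \varphi(1-\varphi)^2,\, \varphi^2(1-\varphi),\, \varphi^3\}$. Degree-elevating $K\,S_i(x)$ to the same cubic basis, as was done for the linear case, and subtracting from the cubic Bernstein expression for $R_i(x)$, I obtain $R_i(x) - [p(x) - K] S_i(x)$ as a single cubic polynomial whose four Bernstein coefficients can be read off term by term.

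Third, I would impose the sufficient (non-sharp but simple) condition that each of the four Bernstein coefficients be nonnegative, the same strategy employed in Theorem \ref{abovelinecon}. The coefficients of $(1-\varphi)^3$ and $\varphi^3$ reduce to $r_i(y_i - p_i + K) \ge 0$ and $t_i(y_{i+1} - p_{i+1} + K) \ge 0$; using $y_j \ge p_j$ and $M \ge 0$, the same algebraic rearrangement as in the linear case converts these into the bound on $|\alpha|_\infty$ stated in item (i). The two interior Bernstein coefficients yield exactly items (ii) and (iii).

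The only substantive novelty relative to the linear case — and essentially the only bookkeeping point where care is required — is the extra $p_i' h_i$ appearing in the middle Bernstein coefficient of $p(x)$. Upon multiplication by $S_i(x)$, this contributes $r_i p_i' h_i$ to the $\varphi(1-\varphi)^2$ coefficient and $t_i p_i' h_i$ to the $\varphi^2(1-\varphi)$ coefficient of $p(x) S_i(x)$; these then propagate with a minus sign into items (ii) and (iii) and account for precisely the $-h_i p_i'$ terms that distinguish the present theorem from its linear counterpart. Apart from tracking these terms carefully, the calculation is a direct transcription of the proof of Theorem \ref{abovelinecon}.
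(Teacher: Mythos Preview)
Your proposal is correct and follows exactly the approach the paper takes: the paper merely states that ``Analysis similar to the case of piecewise linear $p$ establishes the following theorem,'' after recording the quadratic Bernstein representation $p(x)=p_i(1-\varphi)^2+(2p_i+p_i'h_i)\varphi(1-\varphi)+p_{i+1}\varphi^2$, and your write-up is precisely that analysis carried out in full. The only novelty relative to Theorem~\ref{abovelinecon}, as you correctly identify, is tracking the extra $h_ip_i'$ term through the multiplication by $S_i(x)$, which produces the $-h_ip_i'$ contributions in items (ii) and (iii).
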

 \noindent Note that if $M$ is very large, then admissible value for $|\alpha|_\infty$ will be close to zero. Consequently, the fractal perturbation $f^\alpha$ will ``almost" coincide with $f$. This is an unpleasant situation as far as construction of a  $\mathcal{C}^1$-continuous constrained interpolant with ``fractality" in its derivative is concerned, owing to the fact that larger
the value of $|\alpha|_\infty$ (with respect to the interpolation step), more pronounced is the irregularity in the derivative of the smooth fractal interpolant. Furthermore, in light of Remark \ref{exsrem}, it can be deduced that there are some cases in which the constrained interpolation cannot be solved with the rational cubic spline FIF $f^\alpha$. These bring us motivating influence to seek an alternative approach. We shall now provide a quite brief deliberation of this alternative approach for the constrained interpolation. The following is the key theorem.
\begin{theorem}\label{RRAFIFbelthm}
Let $f \in \mathcal{C}^1(I)$ and $\Delta:=\{x_1,x_2,\dots, x_N\}$ be a partition of $I$ satisfying $x_1<x_2<\dots<x_N$. Let
the scaling factors be chosen such that $\alpha_i \in (0, a_i)$ and the base functions be chosen fulfilling  the conditions $b_i^{(j)}(x_1)=f^{(j)}(x_1)$, $b_i^{(j)}(x_N)=f^{(j)}(x_N)$ for $j=0,1$ and  $b_i(x) \le f(x)$ for all $i \in \mathbb{N}_{N-1}$, $x \in I$. Then the corresponding fractal function $f^\alpha\in \mathcal{C}^1(I)$ and $f^\alpha(x) \ge f(x)$ for all $x \in I$.
\end{theorem}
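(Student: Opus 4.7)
The plan is to exploit the Read--Bajraktarevi\'{c} fixed point characterization of $f^\alpha$ in combination with an invariance argument on a closed cone. The $\mathcal{C}^1$ claim requires no real work: with $|\alpha_i|<a_i$ and each $b_i$ agreeing with $f$ at $x_1$ and $x_N$ up to the first derivative, the criterion recalled in Section \ref{ICAFW2sec2} (due to Barnsley--Harrington and adapted by Navascu\'{e}s) guarantees $f^\alpha \in \mathcal{C}^1(I)$ directly, so I would simply quote it.

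For the inequality $f^\alpha(x) \ge f(x)$, I would interpret the functional equation (\ref{ICAFWeq4}), with the family $\{b_i\}$ in place of a single base function, as the fixed point equation for the operator
$$
Th(x) = f(x) + \alpha_i\,(h - b_i)\big(L_i^{-1}(x)\big), \qquad x \in I_i,\ i \in \mathbb{N}_{N-1},
$$
acting on $\mathcal{C}^{**}(I)$. Since each $b_i$ matches $f$ at the endpoints, the piecewise definition of $Th$ is consistent at the interior knots, so $T$ genuinely maps $\mathcal{C}^{**}(I)$ into itself, and the standard estimate shows $T$ is a contraction with factor $|\alpha|_\infty<1$; its unique fixed point is $f^\alpha$.

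The central step is to introduce the closed subset
$$
\mathcal{K} := \{\, h \in \mathcal{C}^{**}(I) : h(x) \ge f(x) \text{ for all } x \in I \,\},
$$
which inherits completeness as a closed subset of the Banach space, and to verify that $T(\mathcal{K}) \subseteq \mathcal{K}$. Given $h \in \mathcal{K}$ and $x \in I_i$, set $\xi := L_i^{-1}(x) \in I$. By the hypothesis $b_i \le f$ on $I$ together with $h \ge f$, we obtain $(h-b_i)(\xi) \ge h(\xi) - f(\xi) + f(\xi) - b_i(\xi) \ge 0$. Since $\alpha_i > 0$, this yields $Th(x) - f(x) = \alpha_i\,(h-b_i)(\xi) \ge 0$, so $Th \in \mathcal{K}$. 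Applying the Banach fixed point theorem to $T|_\mathcal{K}$ produces a fixed point inside $\mathcal{K}$ which, by uniqueness of the fixed point of $T$ on the full space, must coincide with $f^\alpha$. This gives $f^\alpha \ge f$ pointwise on $I$.

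The only genuine point requiring attention is that the base functions vary with $i$; one must check that matching up to the first derivative at both endpoints is enough to keep the piecewise-defined $Th$ in $\mathcal{C}^{**}(I)$ (and ultimately in $\mathcal{C}^1(I)$), which is precisely the purpose of the endpoint conditions imposed on $b_i$. Beyond that, the positivity of the scaling parameters is the indispensable ingredient that converts the hypothesis $b_i \le f$ into the conclusion $f^\alpha \ge f$; without $\alpha_i > 0$ the invariance of $\mathcal{K}$ would fail, which is why the theorem restricts to $\alpha_i \in (0,a_i)$ rather than $|\alpha_i|<a_i$.
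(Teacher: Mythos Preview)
Your proof is correct. The core algebraic step is exactly the one the paper uses: from the functional equation one gets $(f^\alpha-f)\big(L_i(x)\big)=\alpha_i(f^\alpha-f)(x)+\alpha_i(f-b_i)(x)$, and positivity of $\alpha_i$ together with $b_i\le f$ propagates the inequality. The packaging, however, differs. The paper argues inductively over iterations of the IFS: since $(f^\alpha-f)(x_j)\ge 0$ at the knots, the identity shows the inequality persists at all images $L_{i_k}\circ\cdots\circ L_{i_1}(x_j)$, and one then appeals (implicitly) to density of these orbit points in $I$ and continuity of $f^\alpha-f$. You instead show that the Read--Bajraktarevi\'{c} operator leaves the closed set $\mathcal{K}=\{h\in\mathcal{C}^{**}(I):h\ge f\}$ invariant and invoke the Banach fixed point theorem on $\mathcal{K}$ directly. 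Your route is a bit cleaner in that it avoids the density-plus-continuity step and makes non-emptiness of $\mathcal{K}$ (via $f\in\mathcal{K}$) explicit; the paper's route has the virtue of working with $f^\alpha$ itself rather than an auxiliary class of functions. Either way the substance is the same key sign computation.
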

\begin{proof}
Recall that  fractal function $f^\alpha$ corresponding to $f$ satisfies the functional equation
\begin{equation}\label{RRAFIFeqna}
f^\alpha\big(L_i(x)\big)=f\big(L_i(x) \big) +\alpha_i[f^\alpha(x)-b_i(x)],~ x \in I.
\end{equation}
For $|\alpha_i|<a_i$ and base functions coinciding with $f$ at the extremes of the interval up to the first derivative, it follows  \cite{VC4} that $f^\alpha \in \mathcal{C}(I)$. \\Further, $f^\alpha(x_i)=f(x_i)$, i.e., $(f^\alpha-f)(x_i) \ge 0$ for all $i \in \mathbb{N}_N$ (in fact, the equality).
Note that  $I=\underset{i \in \mathbb{N}_{N-1}}\cup L_i(I)$ and $f^\alpha$ is constructed using an iterated scheme
according to \eqref{RRAFIFeqna}. Whence,  to prove $(f^\alpha-f)(x)\ge 0$ for all $x \in I$ it is enough to prove that $(f^\alpha-f)\ge 0$ holds good at the points on $I$
obtained at $(i+1)$-th iteration whenever $(f^\alpha-f)\ge 0$ is satisfied for the points on $I$ at $i$-th iteration. The aforementioned condition is equivalent to $(f^\alpha-f)\big(L_i(x)\big) \ge 0$ for all $i \in \mathbb{N}_{N-1}$ whenever
 $(f^\alpha-f)(x) \ge 0$. We may rewrite \eqref{RRAFIFeqna} in the form
 \begin{equation}\label{RRAFIFeqnb}
 \begin{split}
 (f^\alpha-f) \big (L_i(x)\big) =&~\alpha_i(f^\alpha-b_i)(x),\\
 =&~\alpha_i(f^\alpha-f)(x)+\alpha_i(f-b_i)(x).
 \end{split}
 \end{equation}
 Assume that the scaling factors are chosen such that $\alpha_i\ge 0$ for all $i \in \mathbb{N}_{N-1}$. By our assumption $(f^\alpha-f)(x) \ge0$, hence an appeal to \eqref{RRAFIFeqnb} reveals that $(f^\alpha-f) \big (L_i(x)\big) \ge 0$ is satisfied whenever $b_i(x) \le f(x)$ for all $x \in I$ and $i \in \mathbb{N}_{N-1}$, offering the proof.
\end{proof}
The foregoing theorem demonstrates that if a constrained interpolation problem is solved with a classical rational interpolant $f$, then the perturbation process can be so designed that the corresponding fractal spline $f^\alpha$ also provides a solution to the same constrained interpolation problem. For instance, let $f$ be the rational cubic spline with linear denominator lying above a piecewise linear function $p$. Choose $\alpha_i \in (0, a_i)$ and the base functions $b_i= f- \hat{b}_i$, where $\hat{b_i}$ are positive functions satisfying $\hat{b}_i^{(j)}(x_1)=0$, $\hat{b}_i^{(j)}(x_N)=0$ for $i \in \mathbb{N}_{N-1}$, $j=0,1$. Since positivity preserving Hermite polynomial and rational spline interpolants are well-studied in the literature, it is not hard to find a family of functions $\{\hat{b}_i:i \in \mathbb{N}_{N-1}\}$  satisfying
  the aforementioned conditions. Note that cubic Hermite interpolants may not serve as candidates for $\hat{b_i}$, as they may reduce to zero functions owing to the imposed conditions. The fractal function $f^\alpha$ corresponding to $f$ with these choices of scaling parameters
  and base functions will lie above $p$. Thus, by dint of a more careful choice on base functions $b_i$, the conditions on $\alpha_i$ can be made relatively weaker, to wit,  $\alpha_i \in (0, a_i)$ for all $i \in \mathbb{N}_{N-1}$.
\section{Numerical Examples}\label{ICAFW2sec5}
In this section, the developed conditions on parameters are leveraged to obtain rational cubic spline FIFs that lie above a prescribed piecewise linear function (polygonal) or quadratic function. Consider the data set $\{(x_i,y_i,d_i), i=1,2,3,4,5\}=\{(0,18, -4.02), (3,10, -1.31), (7,12, -0.36),\\ (10,9, 0.2), (15,20, 4.2)\}$. Note that the prescribed data set lies above the piecewise linear function $p$ with nodes at $\{(0,12),(3,4),(7,10),(10,4)\}$ given by
$$
   p(x) := \left\{
     \begin{array}{lr}
       -\frac{8}{3} x +12 ~~ \text{if}~~ 0\le x \le 3,\\
       \frac{3}{2}x-\frac{1}{2}~~~~ ~~ \text{if}~~ 3 \le x \le 7,\\
       -2x + 24 ~~\text{if}~~7 \le x \le 10,\\
       \frac{7}{5} x -10~~~~~\text{if}~~10 \le x \le 15.
     \end{array}
   \right.
$$
Let the derivatives at knot points be  $\{-4.02,-1.31,-0.36,0.2,4.2\}$. Suppose that, due to some reasons, perhaps for a valid physical interpretation of the underlying process, a constrained  interpolant lying above the following piecewise linear function $p$ is required.
For brevity, let us represent  the parameters,
namely, the  scaling factors $\alpha_i$ satisfying $|\alpha_{i}|<a_i<1$, and the positive shape parameters $r_{i}$ and $t_{i}$ as vectors denoted by $\alpha$, $r$ and $t$ in the four dimensional Euclidean space $\mathbb{R}^4$. The details of the scaling vectors and shape parameters used in the construction of  constrained rational FIFs   are provided in Table \ref{table:Data} for a quick reference.
\begin{center}
\begin{table}[h!]
\caption{Scaling vectors  and shape parameters used in the construction of   rational cubic FIFs.}\label{table:Data}
\begin{center}
\begin{tabular}{|l | l|l|}
\hline
Figs.& Scaling Vectors $\alpha$ \hspace{1cm}&Shape Parameter Vectors $r$, $t$ \hspace{1cm} \\ \hline
Fig. \ref{figabvl}(a)& $\left(0.010,  0.020,  0.030, 0.333\right)$ &  $\left(1, 1, 1,  1 \right)$,  $\left(
 3.35, 1, 1, 1\right)$ \\

Fig. \ref{figabvl}(b) & $\left(0.027,  0.027, 0.027, 0.024\right)$ & $\left(
1, 1, 1, 1\right)$, $\left(3.35, 1, 1, 1\right)$ \\
Fig. \ref{figabvl}(c)&  $\left(0, 0, 0, 0\right)$ & $\left(
1, 1, 1, 1\right)$, $\left(3.35, 1, 1, 1\right)$ \\

Fig. 2(a)& $\left(0.012,  0.013, 0.040, 0.005\right)$ & $\left(
9, 1, 0.01, 11 \right)$, $\left(10, 200, 0.0001, 8\right)$\\
Fig. 2(b)& $\left(0.040, 0.039, 0.025,  0.034\right)$&   $\left(9,  1,  0.01, 11\right)$, $\left(10, 200, 0.0001, 8 \right)$\\
Fig. 2(c)&$\left(0.012,  0.013,  0.040,  0.005 \right)$ &  $\left(19, 11, 0.001, 10\right)$, $\left(12, 210, 0.00001, 7  \right)$ \\
\hline
    \end{tabular}
    \end{center}
\end{table}
\end{center}
By selecting the scaling and  shape parameters according to the conditions prescribed in Theorem \ref{abovelinecon} (see Table \ref{table:Data}), a rational cubic spline FIF (cf. Eq. (\ref{15})) lying above the prescribed polygonal is generated in Fig. \ref{figabvl}(a). We construct  rational cubic spline FIF in Fig. \ref{figabvl}(b) by changing  the scaling vector $ \alpha$  with respect to the parameters  of Fig. \ref{figabvl}(a).  By taking the null scale vector, i.e., $\alpha=(0,0,0,0)$, we retrieve the classical rational cubic spline plotted in Fig.  \ref{figabvl}(c).
%%%%%%%%%%%%%%%%%%%%%%figures-----------------------
\begin{figure}
\centering
\begin{minipage}{0.3\textwidth}
\epsfig{file=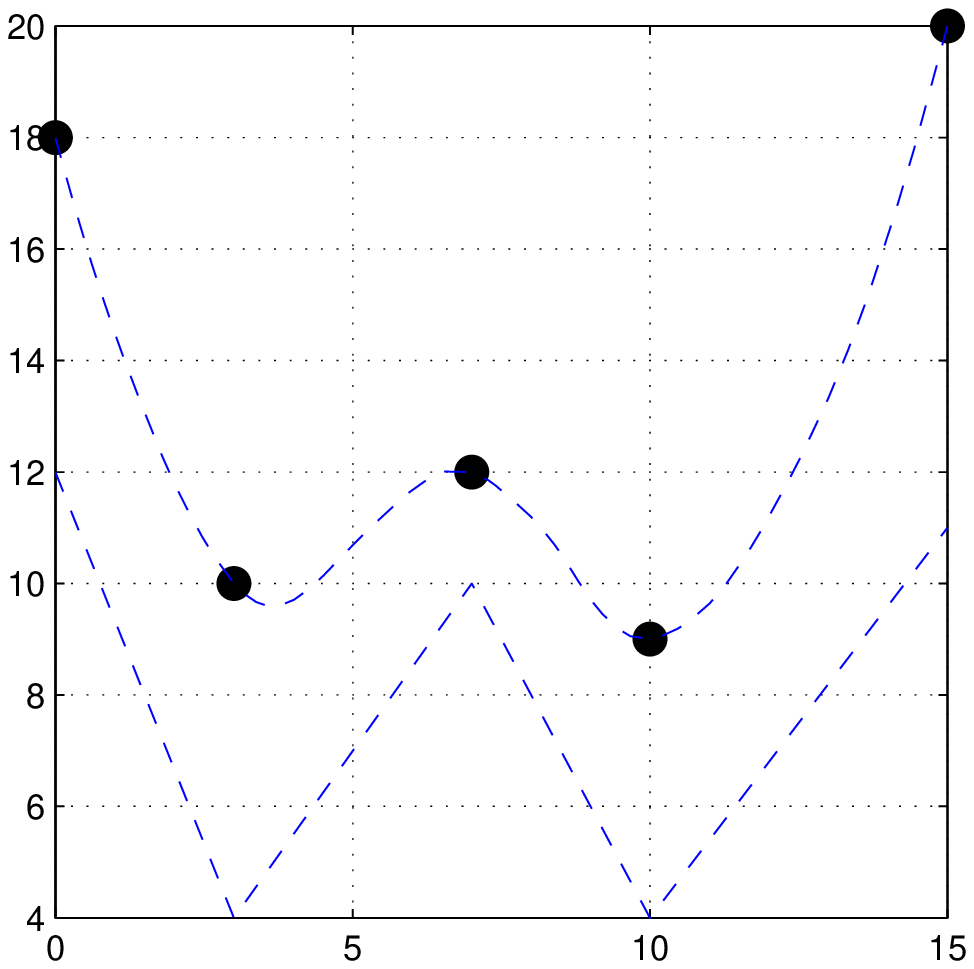,scale=0.3}\\ %%%%%%%%%%%%----------Fixed alpha-----------%%%%%%%%%%%
\centering{ (a) Rational cubic spline FIF.}
\end{minipage}\hfill
\begin{minipage}{0.3\textwidth}
\epsfig{file=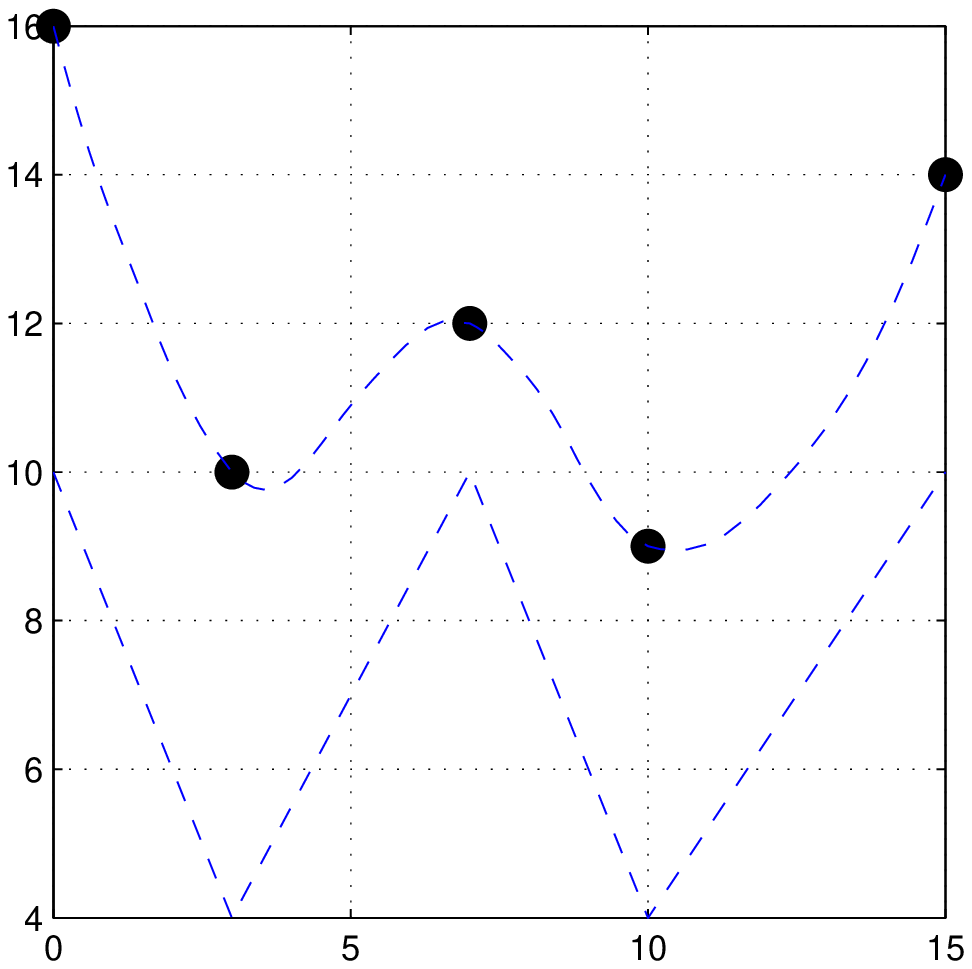,scale=0.3}\\  %%%%%%%%%-Change alpha ,Shape parameters and remainng same(Orginal) --- -----------%%%%%%%%%%%
 \centering{ (b) Effect of change in $\alpha$ in Fig.1(a). }
 \end{minipage}\hfill
  \begin{minipage}{0.3\textwidth}
\epsfig{file=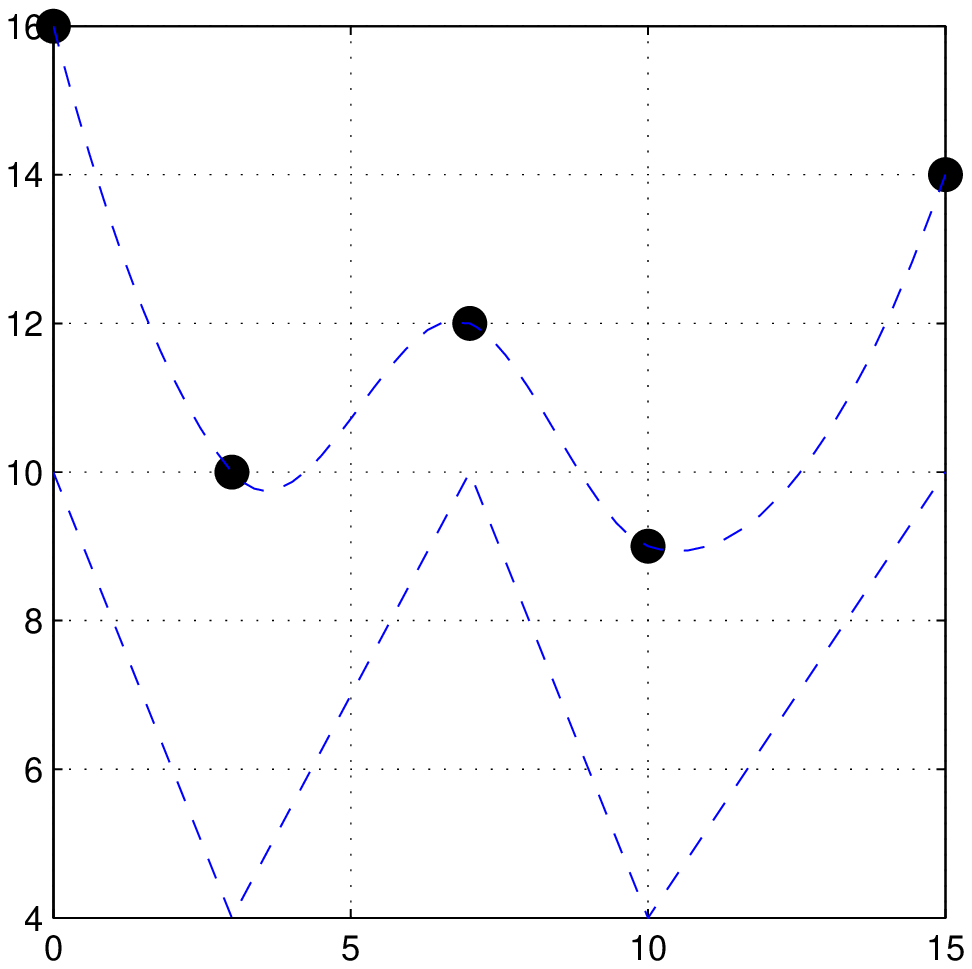,scale=0.3} \\  %%%%%%%%%%%%----------Classical case , remainng same(Orginal) -----------%%%%%%%%%%%
\centering{ (c) Classical rational cubic spline FIF.}
\end{minipage}\hfill
 \caption{Rational cubic spline FIFs lying above a piecewise linear function $p$.}
\label{figabvl}
\end{figure}
\\With specific  choices of parameters (see Table \ref{table:Data}) satisfying conditions prescribed in Theorem \ref{fifabvqf}, we obtain constrained rational cubic FIFs displayed in Figs. \ref{figabvq}(a)-(c) lying above the quadratic spline $q$ defined as follows.
$$q(x) =
\begin{cases}
\frac {x^2-7x+20}{2}, & \text{if } \hspace{0.1cm}0\leq x \leq 3 \\
\frac {(x-3)^2-(x-3)+8}{2}, & \text{if } \hspace{0.1cm} 3 \leq x \leq 7 \\
\frac {-(x-7)^2+21(x-7)+60}{6}, & \text{if } \hspace{0.1cm} 7\leq x \leq 10 \\
\frac {87(x-10)^2-375(x-10)+200}{50}, & \text{if } \hspace{0.1cm} 10 \leq x \leq 15.
\end{cases}
$$
\begin{figure}
\centering
\begin{minipage}{0.3\textwidth}
\epsfig{file=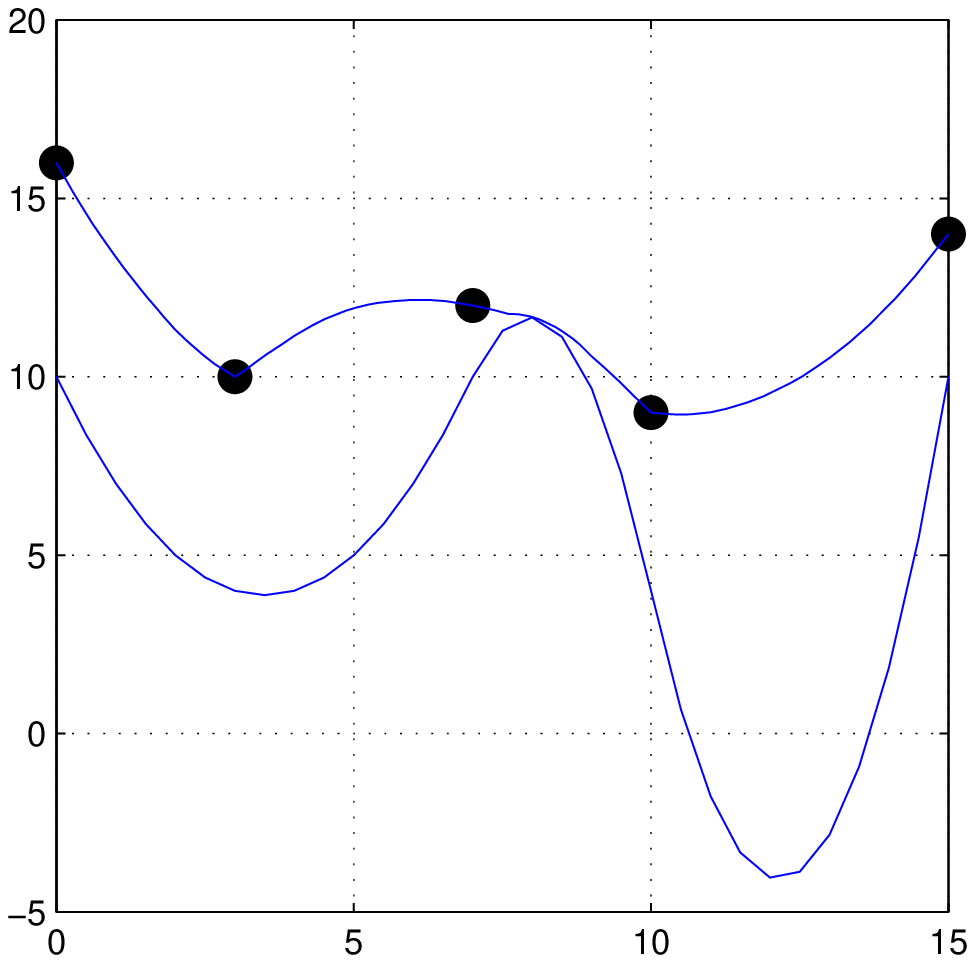,scale=0.3}\\ %%%%%%%%%%%%----------Fixed alpha-----------%%%%%%%%%%%
\centering{ (a) Rational cubic spline FIF.}
\end{minipage}\hfill
\begin{minipage}{0.3\textwidth}
\epsfig{file=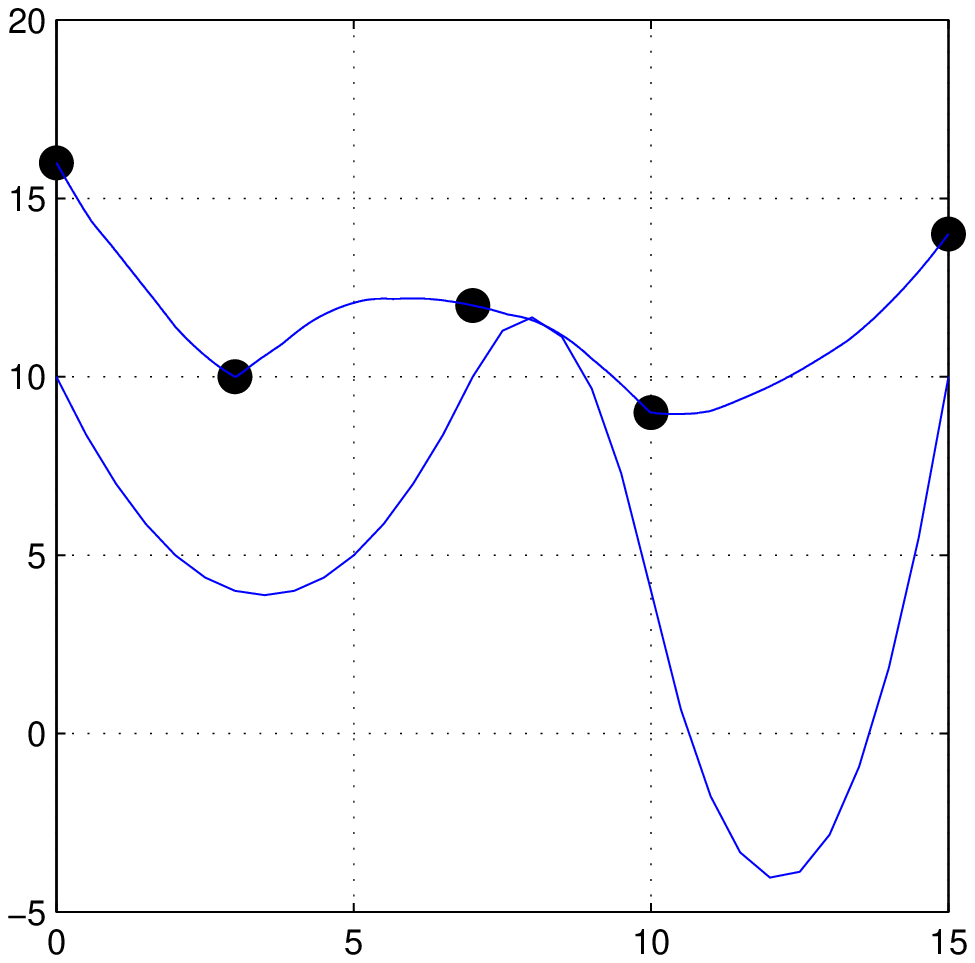,scale=0.3}\\
%%%%%%%%%-Change alpha ,Shape parameters and remainng same(Orginal)values --- -----------%%%%%%%%%%%
 \centering{ (b) Effect of change in $\alpha$ in Fig. \ref{figabvq}(a). }
 \end{minipage}\hfill
 \begin{minipage}{0.3\textwidth}
 \epsfig{file=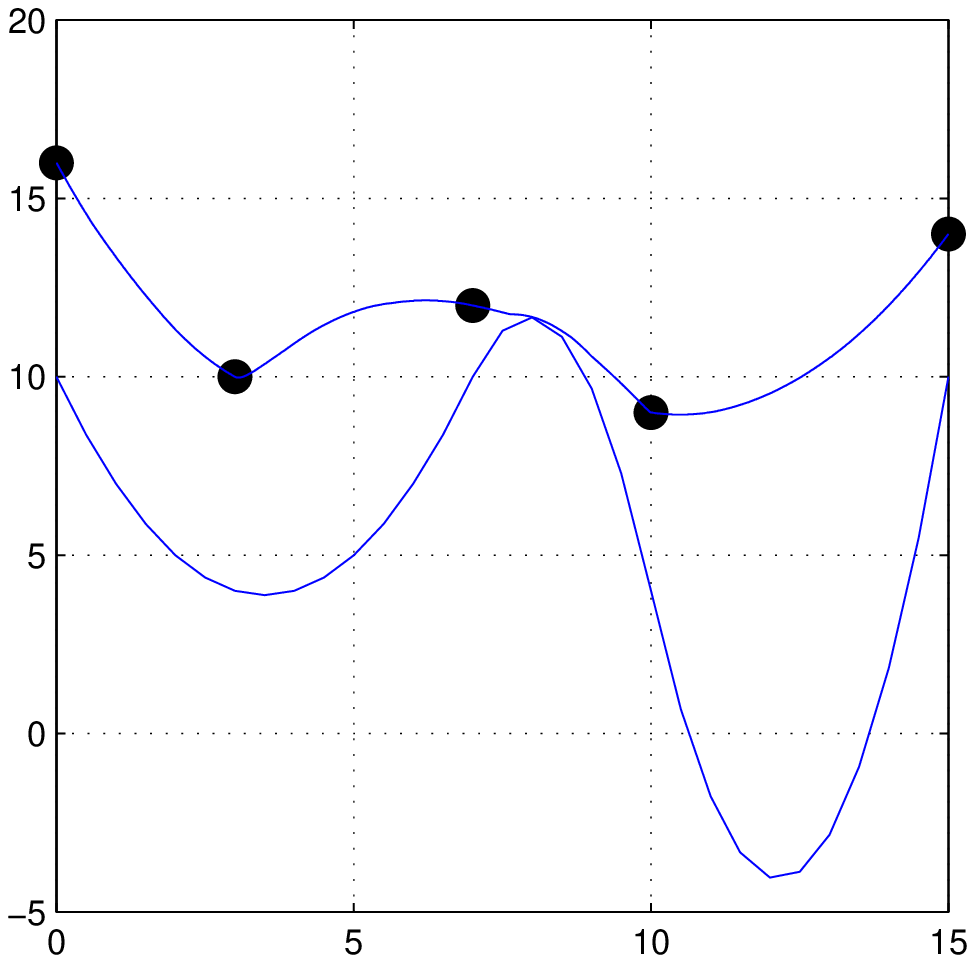,scale=0.3}\\ %%%%%%%%%%%%%----------Change Shape parameters(r,t)values%%%%%
\centering{ (c) Effect of change in shape parameters  in Fig. \ref{figabvq}(a).}
\end{minipage}\hfill
 \caption{Rational cubic spline FIFs lying above a quadratic spline $q$.}
\label{figabvq}
\end{figure}
In contrast to the classical rational spline $f=f^0$, the rational cubic spline FIF $f^\alpha$ has derivative $(f^\alpha)'$ having nondifferentiability in a finite or dense subset of the interpolation interval. Further, the  irregularity may be quantified in terms of  box-counting dimension, which depends mainly on the scaling vector $\alpha$. This may find potential applications in various nonlinear and nonequilibrium phenomena wherein smooth interpolant satisfying a restraint (for instance, positivity, or  more generally lying above or below a prescribed curve) and possessing  irregularity in the derivative of suitable order is required.\\

\noindent \textbf{Note:} Parts of the results of this  paper  were presented  by the third author at the International Conference on Applications of Fractals and Wavelets (ICAFW-2015) held at Amrita School of Engineering, Amrita Vishwa Vidyapeetham, Coimbatore, Tamilnadu, India.

\end{document}